\newcommand{\cp}{\textrm{cp}}
\newcommand{\fp}{\textrm{fp}}
\newcommand{\fvs}{\textrm{fvs}}
\newcommand{\FVS}{\textrm{FVS}}
\newtheorem{theorem}{\bf Theorem}[section]
\newtheorem{lemma}[theorem]{\bf Lemma}
\newtheorem{cor}[theorem]{\bf Corollary}
\newtheorem{problem}[theorem]{\bf Problem}
\newtheorem{nota}[theorem]{\bf Notation}
\newtheorem{claim}[theorem]{\bf Claim}
\newtheorem{remark}[theorem]{\bf Remark}
\newtheorem{defi}[theorem]{\bf Definition}
\newtheorem{conjecture}[theorem]{\bf Conjecture}
\title{Jones' conjecture for Halin graphs and a bit more}
\date{}
\author{Pál Bärnkopf \thanks{Alfréd Rényi Institute of Mathematics, Budapest, Hungary. Partially supported by the Counting in Sparse Graphs Lendület Research Group.
E-mail: {\tt barpal@student.elte.hu}} 
\and  
Ervin Győri \thanks{Alfréd Rényi Institute of Mathematics, Budapest, Hungary, Partially supported by the National Research, Development and Innovation Office NKFIH, grants  K132696 and SNN135643, 	E-mail: {\tt gyori@renyi.hu}}
}
\begin{document}

\maketitle

\begin{abstract}
We prove Jones' famous conjecture for Halin graphs and a somewhat more general class of graphs, too. 
A based planar graph is a planar one that has a face adjacent to every other face. We confirm Jones' conjecture for based planar graphs. Namely, if a based planar graph does not contain $k+1$ vertex-disjoint cycles, then it suffices to delete $2k$ vertices to make it acyclic.

 {\em Keywords: Jones' conjecture, planar graph, cycle packing, feedback vertex set}    
\end{abstract}

\section{Introduction}

A classical theorem of Erdős and Pósa \cite{EP} says that if a graph $G$ does not contain more than $k$ pairwise disjoint cycles, then we can delete $f(k)\le ck\log k$ vertices to make the graph acyclic. 
Jones' conjecture \cite{kloks} raises the same problem in planar graphs. It states that if a planar graph contains no more than $k$ pairwise disjoint cycles, then there are $2k$ vertices such that removing them from the graph gives a forest.  Jones' conjecture was proved for outerplanar graphs \cite{kloks} and subcubic planar graphs \cite{bonamy}. Currently, the best known general result is that if the maximum number of disjoint cycles in an arbitrary planar graph is $k$, then there are $3k$ vertices such that removing them from the graph we get a forest, as independently proved by Chen et al. \cite{chen}, Ma et al. \cite{ma} and Chappell et al. \cite{chappell}.

The main result of this paper is to prove the Jones' conjecture for planar graphs that have a face that is adjacent to every other face. Let us call these graphs \textit{based planar graphs}. (As a generalization of the based polyhedra introduced by Rademacher \cite{rademacher}.)

Throughout the paper, all graphs are simple, even if this is not indicated separately in the statement.

\begin{defi}
 A Halin graph is a planar graph constructed by connecting the leaves of a tree into a cycle.   
\end{defi}

\begin{defi}
    A based planar graph is a planar graph that has a face that is adjacent to every other face.
\end{defi}

\begin{remark}
Notice that a Halin graph is a based planar graph.
\end{remark}

\begin{defi}
    A cycle packing of a graph $G$ is a set of vertex disjoint cycles that appear in $G$ as subgraphs. We denote the maximum size of a cycle packing of $G$ by $\cp(G)$.
\end{defi}

\begin{defi}
    A face packing of a planar graph $G$ is a set of vertex disjoint faces that appear in $G$ as subgraphs. We denote the maximum size of a face packing of $G$ by $\fp(G)$.
\end{defi}

\begin{defi}
    A feedback vertex set of a graph $G$ is a set $S$ of vertices such that $G \setminus S$ is a forest. We denote an arbitrary minimum feedback vertex set of $G$ as $\FVS(G)$ and denote its size by $\fvs(G)$.
\end{defi}

\begin{theorem} \label{main}
    Every based planar graph $G$ satisfies $\fvs(G) \leq 2 \cdot \cp(G)$.
\end{theorem}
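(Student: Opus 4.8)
The plan is to prove the (a priori equivalent, after one structural observation) bound $\fvs(G)\le 2\,\fp(G)$ and to induct on the packing number. First I would dispose of the low-connectivity case: $\cp$ and $\fvs$ are both additive over the blocks of $G$, and a short argument shows that every block of a based planar graph is again based planar (if $B$ is a block and $F_0^B$ is the face of $B$ into which the distinguished face $F_0$ of $G$ expands, then any other face of $B$ contains a face of $G$, which is adjacent to $F_0$ along some edge $e$; this $e$ must lie in $B$, for otherwise deleting $e$ would merge that face into $F_0^B$, and then $e$ witnesses adjacency to $F_0^B$ in $B$). So assume from now on that $G$ is $2$-connected, fix a plane embedding with $F_0$ the outer face, and let $C=\partial F_0$, which is then a cycle.

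The structural heart of the argument is the following. Let $Z$ be any cycle of $G$ and $D_Z$ the closed disc it bounds; $D_Z$ contains at least one face of $G$, and any face $g\subseteq D_Z$ is separated from $F_0$ by $Z$, so the edge along which $g$ is adjacent to $F_0$ must lie on $Z$. In particular every face inside $D_Z$ is also adjacent to $C$ (being adjacent to $F_0$ along a $C$-edge), which is possible only if $D_Z$ reaches $\partial D_C$; since $C$ can have no vertex or edge strictly inside $D_Z$, this forces $V(Z)\cap V(C)\ne\emptyset$, i.e. \emph{every cycle of $G$ meets the outer cycle $C$}. Applying the same reasoning to two vertex-disjoint cycles $Z_1,Z_2$ shows their discs cannot be nested (a face of the inner disc would be cut off from $F_0$), and as vertex-disjoint cycles cannot cross in a plane embedding, $D_{Z_1}$ and $D_{Z_2}$ are in fact disjoint. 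Hence in any cycle packing the cycles bound pairwise disjoint discs, and replacing each cycle by a face of $G$ inside its disc converts a cycle packing of size $k$ into a \emph{face} packing of size $k$; combined with the trivial inequality $\fp\le\cp$ this gives $\fp(G)=\cp(G)$, so it does suffice to show $\fvs(G)\le 2\,\fp(G)$.

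With this in hand I would argue by induction on $k:=\cp(G)=\fp(G)$, the case $k=0$ being trivial. For the inductive step it is enough to produce two vertices $x,y$ such that $G-x-y$ is still based planar and $\cp(G-x-y)\le k-1$, since then $\fvs(G)\le 2+\fvs(G-x-y)\le 2+2(k-1)=2k$. To find them, take a maximum face packing $f_1,\dots,f_k$, chosen extremally (for instance so that, using the disjoint-discs picture along $C$, the disc of $f_1$ comes first in the cyclic order along $C$, with additional minimality of $f_1\cap C$); since $f_1$ is adjacent to $F_0$, there is an edge of $C$ on $\partial f_1$, and the two vertices to be deleted will be chosen among $V(\partial f_1)$. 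One must be careful about \emph{which} pair: the wheel $W_n$ (where one must delete the apex together with one rim vertex, not the two endpoints of the boundary edge of $f_1$ on $C$) already shows that the naive choice fails, and the correct pair is the one that removes a unit of packing number by detaching the region of $f_1$ from the rest. Equivalently, one can skip the induction and build the feedback vertex set directly: from an extremal maximum face packing $f_1,\dots,f_k$, pick the right two vertices of each $\partial f_i$ and prove their union is a feedback vertex set.

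The main obstacle is precisely this last point, in two parts. Preserving based planarity under the deletion is the easier half: the face of $G-x-y$ containing the remains of $F_0$ stays adjacent to every other face, because every face of $G$ is adjacent to $F_0$ along a $C$-edge and, for faces not near $\{x,y\}$, that edge survives. The genuinely hard part is showing the packing number strictly drops: assuming a maximum packing of $G-x-y$ still has size $k$, one must use the extremality of $f_1$ together with the non-nesting / every-cycle-meets-$C$ structure to reroute that packing, adjoined with $f_1$ (or a suitable cycle through $x$ and $y$), into a packing of size $k+1$ in $G$ — a contradiction. Making this rerouting precise, in particular pinning down the extremal face and the correct deleted pair and controlling the cycles that meet $V(\partial f_1)$ without entering its disc, is where the bulk of the work lies; the structural lemma of the second paragraph is exactly what keeps the configuration rigid enough for such an exchange argument to go through.
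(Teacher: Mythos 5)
There is a genuine gap, and you in fact name it yourself: the entire inductive step hinges on producing two vertices $x,y$ with $G-x-y$ based planar and $\cp(G-x-y)\le \cp(G)-1$, and your proposal leaves this unproved, deferring it to an unspecified ``rerouting/exchange'' argument driven by extremality of $f_1$. Your preliminary observations are fine (in the $2$-connected case every cycle must meet the outer cycle, vertex-disjoint cycles bound disjoint discs, hence $\fp(G)=\cp(G)$), but they do not by themselves resolve the difficulty: even knowing that a maximum packing consists of faces with pairwise disjoint discs attached to the outer cycle, choosing the correct pair of vertices per face so that the packing number actually drops is exactly the whole problem, as your own wheel example $W_n$ illustrates. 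A proof that stops at ``one must reroute the packing into a packing of size $k+1$ --- making this precise is where the bulk of the work lies'' is a plan, not a proof.

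The paper closes this gap by a different, local mechanism rather than by an exchange argument on an extremal face packing. First it performs elementary reductions (delete vertices of degree at most $1$; if a triangle has a degree-$2$ vertex $x$, delete its other two vertices, which drops $\cp$ by $1$; suppress degree-$2$ vertices not in triangles), so one may assume minimum degree at least $3$. The heart is then a separate lemma: every based planar graph with minimum degree at least $3$ contains a ``good triangle'' $xyz$ with $d(x)=d(y)=3$ and $x,y$ on the outer face (even one avoiding any prescribed outer vertex), proved by a double induction involving vertex splitting and cut-vertex surgery. Deleting $y$ and $z$ then leaves $x$ with degree at most $1$, so $x$ lies on no cycle of $G'$ and the triangle $xyz$ can be added to any cycle packing of $G'$, giving $\cp(G')\le\cp(G)-1$ for free; adding $y,z$ to a minimum feedback vertex set of $G'$ completes the induction. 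This good-triangle lemma is precisely the missing idea in your proposal; without it (or a worked-out substitute for your exchange step), your argument does not go through.
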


\begin{cor}
    Every Halin graph $G$ satisfies $\fvs(G) \leq 2 \cdot \cp(G)$.
\end{cor}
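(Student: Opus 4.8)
The plan is to combine a structural lemma about based planar graphs with an induction on $\cp(G)$.

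\textbf{A structural lemma.} Let $G$ be based planar with base face $f_0$, and let $B_0$ be the set of edges on the boundary of $f_0$; when $G$ is $2$-connected this boundary is a cycle $C_0$ and $B_0=E(C_0)$. I claim $G-B_0$ is a forest. Indeed, a cycle $C'$ in $G-B_0$ bounds a disk $\Delta$ not containing $f_0$; then $\Delta$ contains some face $g\neq f_0$, and since $g$ is adjacent to $f_0$ they share an edge $e$. But $e$ lies on the boundary of $g$, hence in $\Delta$, and also on the boundary of $f_0$, hence outside the interior of $\Delta$, so $e\subseteq C'$, i.e.\ $e\in E(C')$, contradicting $e\in B_0$. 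In particular every cycle of $G$ uses an edge of $C_0$. The same lemma lets one reduce to the $2$-connected case: at a cut vertex $v$, either $f_0$ is incident to $v$ and each side is again based planar, or $f_0$ sits on one side and the other side is a tree (an interior face there could not touch $f_0$), hence contributes nothing.

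\textbf{The induction.} We induct on $k=\cp(G)$; the base case $k=0$ is a forest. For the inductive step it suffices to produce two vertices $x,y$ such that $G-x-y$ is based planar and $\cp(G-x-y)\le k-1$, since then $\fvs(G)\le\fvs(G-x-y)+2\le 2(k-1)+2=2k$. Deleting a vertex lying on $C_0$ (or, more generally, a vertex each of whose incident faces meets $C_0$ in an edge not at that vertex) merely merges some faces with the base face, so based planarity is preserved; arranging that the chosen $x,y$ are of this kind is part of the argument.

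\textbf{Choosing the pair, and the main obstacle.} Start from a maximum packing $\{C_1,\dots,C_k\}$. Since vertex-disjoint cycles bound nested or disjoint disks, by replacing each $C_i$ with an innermost cycle inside its disk — which, having no interior vertex and no chord by $2$-connectivity, is a facial cycle — we may assume each $C_i$ is a face $f_i$. The aim is to pick $x,y$ on the boundary of $f^*:=f_1$ so that every other vertex of $f^*$ has degree at most $1$ in $G-x-y$; then no cycle of $G-x-y$ meets $V(f^*)$, so a maximum packing of $G-x-y$ together with $f^*$ would be a cycle packing of $G$ of size $\cp(G-x-y)+1$, forcing $\cp(G-x-y)\le k-1$. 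Concretely this asks that each $v\in V(f^*)\setminus\{x,y\}$ has $\deg_G(v)\le 3$ and is joined to both of $x,y$ when $\deg_G(v)=3$; the structural lemma helps, since $\partial f^*$ alternates arcs of $C_0$ (whose interior vertices have degree $2$) with paths in the forest $G-E(C_0)$. This is immediate when $f^*$ is small, e.g.\ a triangle containing a degree-$2$ vertex, or a facial triangle all of whose vertices have degree $3$; the wheels, where $f^*$ is a triangle and the correct choice of $x$ is the central vertex rather than a vertex of $C_0$, show that the pair need not lie on $C_0$. I expect the real work to be the general case: one must show that among all maximum packings by faces some $f^*$ has a boundary simple enough for such a pair to exist — presumably by taking $f^*$ to minimise a parameter such as $|V(f^*)|$ or the number of $C_0$-arcs on $\partial f^*$, analysing the shape of an extremal $f^*$ via planarity and the forest structure of $G-E(C_0)$, and checking in each configuration that deleting $x,y$ keeps the graph based planar.
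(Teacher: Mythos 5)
There is a genuine gap, and you have named it yourself: the entire difficulty of the theorem is concentrated in the step you defer with ``I expect the real work to be the general case.'' Your framework (every cycle meets the boundary $C_0$ of the base face because $G-E(C_0)$ is a forest; induct by deleting two vertices $x,y$ so that some packed face $f^*$ can be added back to a maximum packing of $G-x-y$) is essentially the skeleton of the paper's argument, but the paper's proof lives exactly in the part you leave open: producing, after first suppressing vertices of degree at most $2$ (delete degree-$\le 1$ vertices; if a degree-$2$ vertex lies in a triangle delete the other two vertices of that triangle; otherwise suppress it), a \emph{triangle} $xyz$ with $x,y$ on the outer face and $d(x)=d(y)=3$. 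That existence statement (Lemma~\ref{triangle}) is proved by a double induction (on the number of outer vertices of degree $>3$, then on $|V(G)|$), with separate treatments of cut vertices, vertex splitting at a high-degree outer vertex, and a contradiction argument when every candidate triangle contains the split vertex; none of this, nor any substitute for it, appears in your proposal. Choosing $f^*$ ``to minimise $|V(f^*)|$ or the number of $C_0$-arcs'' is a plausible heuristic, but you give no analysis showing an extremal $f^*$ admits the required pair, and your stated requirement (every vertex of $f^*$ other than $x,y$ has degree $\le 3$ and, if of degree $3$, is adjacent to both $x$ and $y$) is very restrictive for faces of length $\ge 4$, which is precisely why the paper first forces minimum degree $3$ and then works only with triangles rather than with an arbitrary packed face.

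Two smaller points. First, your concern about preserving based planarity under vertex deletion is unnecessary: deleting vertices or edges only merges faces, and a merged face inherits adjacency to the (possibly enlarged) base face, so every subgraph of a based planar graph is based planar — the paper uses this without fuss, and it removes the constraint you impose on where $x,y$ may sit. Second, since the statement under review is only the Halin case, note that the corollary in the paper is a one-line consequence of the general theorem (a Halin graph is based planar); even if you aimed only at Halin graphs you would still need a class closed under the deletions performed in the induction, which is exactly why the paper proves the theorem for based planar graphs — so the missing structural lemma cannot be avoided by specialising to Halin graphs either.
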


\section{Proof of Theorem \ref{main}}

From now on, the outer face of a based planar graph is adjacent to every other face. 

\begin{nota}
        In the following proof \textit{a good triangle} is a triangle $xyz$, for which $d(x)=d(y)=3$ so that the vertices $x$ and $y$ are on the outer face. A good triangle for a vertex $u$ is a good triangle that does not contain $u$.
\end{nota}

\begin{claim} \label{degree3}
    If the degree of each vertex in a based planar graph $G$ is at least three and the degree of each vertex on the outer face is $3$, then for an arbitrary vertex $u$ on the outer face, there is a good triangle $xyz$ in the graph $G$ for the vertex $u$. 
\end{claim}

\begin{proof}[Proof of Claim \ref{degree3}]
    The proof is done by induction on the number of vertices. The number of vertices of $G$ is at least 4 and if the number of vertices is 4, then the graph is $K_4$, for which the statement is obviously true.

    If we omit the edges at the border of the outer face, we get a forest. If this forest is a star, then our graph is a wheel, for which the statement holds. 
    
    If this forest is a tree, but it is not a star, then let us take a longest path in it. Consider one of the two penultimate vertices of the path (they are different because the graph is not a star). Every neighbor of that vertex, other than its neighbor on the path, is a leaf, because if not, then the path would not be a longest path. Thus the penultimate vertex has at least two leaf neighbors. Let us take two of them. The two leaf-edges divide the inner region into two subregions, the nonterminal vertices of the longest path are in one of these subregions. Now consider the other subregion. In this subregion every edge incident to the penultimate vertex is a leaf edge. (If not then either there is a longer path or the forest would not be connected.) So, every vertex of this subregion except the penultimate vertex is a leaf. Two of these leaves will be adjacent on the border of the outer face, so we have found a good triangle. We can do the same at the other end of the longest path. So, we found two vertex-disjoint good triangles. It follows that there is a good triangle for every vertex.

    Consider the case where the resulting forest consists of several components. Let us choose one of the trees as we want. Its leaves divide the border of the outer face into arcs. Let us choose another tree; its vertices are on one of the resulting arcs. We prove that we may assume that neither of the two trees has only two vertices. 
    
    Assume that one of these trees has only two vertices. (If there are two, take any of them.) Then these two vertices divide the cycle into two arcs. No multiple edges, so there are additional vertices (and trees) on both arcs. Instead of the previously chosen trees, choose the tree in each arc that covers a smallest section of this arc. If $F$ is such a tree then no other tree is associated to the arc covered by $F$. So, $F$ should have more than 2 vertices as we have seen above.
    
    Denote $a$ and $b$ the two leaves of the first tree, which are the endpoints of the associated arc that disjoint to the second tree, and denote $c$ and $d$ the two extreme leaves of the second tree on the arc so that the vertices follow each other in order $a,c,d,b$ at the border of the outer face. Denote $\mathcal{C}$ the arc $ab$ that contains the vertices $c$ and $d$. Delete the edges connected to $a$ and $c$ on the arc $ac$ contained in $\mathcal{C}$ (these edges are maybe the same) and delete the edges connected to $b$ and $d$ on the arc $bd$ contained in $\mathcal{C}$ (these edges are maybe the same) and connect with an edge the vertices $a$ and $b$ and the vertices $c$ and $d$. Thus, we get at least two components. We can use induction for two of them: $G_1$ that contains the vertices $a$ and $b$ and $G_2$ that contains the vertices $c$ and $d$.
    By induction, there is a good triangle for $a$ in $G_1$ and a good triangle for $c$ in $G_2$. Neither of the two triangles can use either of the two new edges, so they are vertex-disjoint good triangles in the original graph $G$. It follows that there is a good triangle for every vertex. (It is not necessary, but it is sufficient to find two disjoint triangles whose vertices on the outer face have degree $3$.)
\end{proof}

\begin{lemma} \label{triangle}
    If the degree of each vertex in a based planar graph $G$ is at least three, then for an arbitrary vertex $u$ on the outer face, there is a good triangle $xyz$ in the graph $G$ for the vertex $u$. 
\end{lemma}

\begin{proof}[Proof of Lemma \ref{triangle}]

    Let $k$ be the number of vertices on the outer face whose degree is greater than three. The proof is done by induction according to $k$ and within it by induction according to the number of vertices. (So, if the value of $k$ is smaller for another graph, or the value of $k$ is the same, but the number of vertices is smaller, then according to the induction assumption, the statement is true for the other graph.) If $k = 0$, then the statement is true because of Claim \ref{degree3}.

    If $k>0$, then let us consider a vertex $v$ on the outer face whose degree is greater than three. If the vertex $v$ is a cut vertex, then $G=G_1 \cup G_2$ such that $G_1 \cap G_2=\{v\}$ and $G_1 \setminus \{v\}$ and $G_2 \setminus \{v\}$ are disjoint. If the degree of the vertex $v$ in the graph $G_1$ or in the graph $G_2$ is at least three, then we are done. Since, we may assume that the degree of the vertex $v$ is at least three in the graph $G_1$. The graph $G_1$ has fewer vertices than $G$ and the value of $k$ cannot increase with the operation. So, there will be a good triangle for every vertex on the outer face of $G_1$, due to induction. Any of these triangles will also be good for the vertices of the graph $G_2 \setminus \{v\}$. Consider the case when the degree of the vertex $v$ in both graph $G_ 1$ and $G_ 2$ is two. Denote the neighbors of the vertex $v$ in the graph $G_1$ by $e$ and $f$ and in the graph $G_2$ by $g$ and $h$. Delete the vertex $v$ and connect the vertices $e$ and $g$ in addition the vertices $f$ and $h$ with one edge each. We can apply induction to the resulting graph. There will be a good triangle for each vertex and this triangle will be a good triangle in the original graph too, since we did not create a new triangle (and these triangles will be good for the vertex $v$, too). 
    
    Let us consider a vertex $v$ on the outer face whose degree is greater than three and that is not a cut vertex. Let us split the vertex $v$ into a vertex $v_1$ and a vertex $v_2$ ($v_1$ and $v_2$ should be adjacent). The two edges that are on the border of the outer face and run into the vertex $v$, after that let them run into the vertex $v_1$. The other edges that run into the vertex $v$, after that let them run into the vertex $v_2$. This operation maintains that the degree of each vertex is at least three (the degree of the vertices $v_1$ and $v_2$ is at least three and the degrees of the other vertices do not change). It is preserved that the graph is a planar graph and that every face is adjacent to the outer face. The value of $k$ has decreased for the graph, so we can apply the induction assumption to it. The vertex $v_1$ will not be included in a triangle. If the vertex $v_2$ is included in a good triangle for some vertex $u$, it corresponds to a good triangle in the original graph for $u$ in which the vertex $v$ is present instead of the vertex $v_2$. 
    
    The only problem is that the induction does not guarantee a good triangle for the vertex $v$.
    For this we have to stipulate that the vertex $v_2$ is not included in the triangle, but the vertex $v_2$ is not on the border of the outer face. Assume by contradiction that every good triangle that has two three-degrees vertices on the outer face contains the vertex $v$. There must be at least two such triangles, since for every vertex different from the vertex $v$ we proved by induction that there is a good triangle that does not contain that vertex. Let these two good triangles be $vab$ and $vcd$ such that they are located in the order $vabcdv$ on the border of the outer face. $d(a)=d(b)=d(c)=d(d)=3$, so neither $a$ nor $d$ can be the neighbor of $v$ on the border of the outer face. The vertices $a$ and $d$ divide the boundary of the outer face into two arcs. Let us take the one that contains the vertices $b$ and $c$. This arc, together with the vertices $a, d$, and $v$, defines a cycle. Delete all the vertices that are inside the cycle (in the planar embedding) and all internal points of the arc and join the vertices $a$ and $d$ with an edge. The resulting graph satisfies the conditions of the lemma: the degree of every vertex is at least three (because $a$ and $d$ are adjacent to their neighbors on the border of the outer face, to $v$ and to each other) and every face is adjacent to the outer face. Furthermore, the number of vertices on the outer face whose degree is greater than three cannot increase. So we can use the induction assumption. In the resulting graph, we find a good triangle that does not contain $v$, this is also a suitable triangle in the original graph, so we have reached a contradiction. Since it cannot happen that we do not find a good triangle for the vertex $v$, the induction works and the statement of the lemma is true.    
\end{proof}

\begin{proof}[Proof of Theorem \ref{main}]
    The proof is done by induction on $n$, the number of vertices. If $n \leq 3$, then the theorem is trivially true. 
    
\begin{remark}
    All subgraphs of a based planar graph are based planar graphs. We never select the outer face in the cycle packing. This does not narrow the meaning of cycle packing, since we only delete one trivial case, but it makes the proofs technically simpler.
\end{remark}

    If there is a vertex $u$ in the graph $G$ of which degree is smaller than $2$, then we delete one of those vertices. Let the resulting graph be $G'$. $\cp(G)=\cp(G')$ and $\fvs(G)=\fvs(G')$, since $u$ is not in any cycle. The statement is true for the graph $G'$ by induction, so it is also true for the graph $G$.

    If there is a triangle $xyz$ in the graph $G$ such that $d(x)=2$, then let $G'$ be the graph obtained by omitting the vertices $y$ and $z$ from the graph $G$. $\cp(G') \leq \cp(G)-1$ since let we take a cycle packing in the graph $G'$, in the graph $G$ we can also add the triangle $xyz$ to this cycle packing. Thus for every cycle packing in the graph $G'$ we find a larger cycle packing in the graph $G$, so the maximum size of a cycle packing of $G$ is necessarily bigger than the maximum size of a cycle packing of $G'$. Let $\cp(G)=k$ then $\cp(G') \leq k-1$, so $\fvs(G') \leq 2k-2$ by induction. Let $F'$ be an arbitrary minimum feedback vertex set of the graph $G'$, then $F=F' \cup \{y,z\}$ is a feedback vertex set of the graph $G$ because the difference between the graph $G \setminus F$ and the graph $G' \setminus F'$ is only an isolated vertex. Thus, $\fvs(G) \leq \fvs(G')+2 \leq 2k$.

    If there is a vertex $u$ of degree two in the graph $G$, which is not in a triangle, then let $G'$ be the graph obtained by omitting the vertex $u$ from the graph $G$ and connecting the two neighbors of the vertex $u$ with an edge. This graph is trivially still a based planar graph. Any minimum feedback vertex set in this graph is also a minimum feedback vertex set in the original graph. If a minimum feedback vertex set in $G$ does not contain $u$, then it is a minimum feedback vertex set in $G'$, too. If a minimum feedback vertex set in $G$ contains $u$, then let us delete $u$ from it and add a neighbor of $u$. So, we get a minimum feedback vertex set with the same size in $G'$. Furthermore, similar is true for cycle packings: they are essentially identical in the two graphs. So $\cp(G)=\cp(G')$ and $\fvs(G)=\fvs(G')$, thus, by induction, we are done. (The statement is true for the graph $G'$, so it is also true for the graph $G$.)

    Now, we may assume that the degree of each vertex in the graph is at least three. Then, according to Lemma \ref{triangle}, there is a triangle $xyz$, for which $d(x)=d(y)=3$ such that the vertices $x$ and $y$ are on the outer face. Let $G'$ be the graph obtained by deleting the vertices $y$ and $z$ from the graph $G$. $\cp(G')<\cp(G)$, since the vertex $x$ cannot be included in any cycle in the graph $G'$, so we can add the triangle $xyz$ to an arbitrary cycle packing of the graph $G'$ and thus get a bigger cycle packing. Then let us take a minimum feedback vertex set of the graph $G'$, adding the vertices $y$ and $z$ to this, we get a feedback vertex set of the graph $G$, whose size is at most $2 \cp(G')+2 \leq 2 \cdot \cp(G)$.
\end{proof}

The proof works in the same way, if instead of cycle packing, we considered face packing, so the following stronger statement also holds.

\begin{remark}
    Every based planar graph $G$ satisfies $\fvs(G) \leq 2 \cdot \fp(G)$.
\end{remark}

\section{Concluding remarks}

The proof takes advantage of the fact that there is a face that is adjacent to every other face. The question arises naturally whether the statement is true even if there is a cycle in the graph which has at least one edge in common with every face?

\begin{conjecture}
    Let $G$ be a planar graph in which there is a cycle which has at least one edge in common with every face. Prove $\fvs(G) \leq 2 \cdot \cp(G)$.
\end{conjecture}

Another similar approach might be the special case of hamiltonian graphs.

\begin{problem}
    Let $G$ be a hamiltonian planar graph. Prove $\fvs(G) \leq 2 \cdot \cp(G)$.
\end{problem}



\section*{Statements and declarations}
The authors declare that they has no conflict of interest.
Data sharing not applicable to this article as no datasets were generated or analysed during the current study.

\end{document}